\def \1{{\bf 1}}
\def \al{\alpha}
\def \ds{\displaystyle}
\def \e{\emph}
\def \eps{\varepsilon}
\def \ol{\overline}
\def \supp{\operatorname{supp}}
\def \({\left(}
\def \){\right)}
\newtheorem{theorem}{Theorem}[section]
\newtheorem{conjecture}[theorem]{Conjecture}
\newtheorem{lemma}[theorem]{Lemma}
\newtheorem{proposition}[theorem]{Proposition}
\theoremstyle{definition}
\newtheorem{definition}[theorem]{Definition}
\begin{document}

\pagestyle{myheadings} \markright{HAAR SYSTEMS FOR GROUPOIDS}

\title{On Haar systems for groupoids}
\author{Anton Deitmar}
\date{}
\maketitle

{\bf Abstract:} It is shown that a locally compact groupoid with open range map does not always admit a Haar system.
It then is shown how to construct a Haar system if the stability groupoid and the quotient by the stability groupoid both admit one.

{\bf MSC: 28C10}, 22A22

$$ $$

\tableofcontents

\newpage
\section*{Introduction}

Topological groupoids occur naturally in encoding hidden symmetries like in fundamental groupoids or holonomy groupoids of foliations, see \cite{Paterson}, for instance.
In order to construct convolution algebras on groupoids \cites{Renault,RenaultCK}, one needs continuous families of invariant measures, so called \e{Haar systems} \cite{Seda1}, see also Section \ref{SecHaar}.
These do not always exist.
One known criterion is that a Haar system can only exist if the range map is open  (Corollary to Lemma 2 in \cite{Seda}, see also \cite{Williams}).

A second criterion, which has been neglected in the literature, is the possibility of \e{failing support}, i.e., it is possible that, although the range map is open, the support condition of a Haar system cannot be satisfied, see Proposition \ref{Prop4.1}.
We conjecture, however, that there should always be a Haar system for a locally compact groupoid with open range map, if the groupoid is second countable. 

We show how to construct Haar systems if the stability groupoid and its quotient both admit one.

I thank Dana Williams for some very helpful comments.

\section{Locally compact groupoids}\label{SecHaar}

\begin{definition}
By a \e{bundle of groups} we understand a continuous map $\pi:G\to X$ between locally compact Hausdorff spaces together with a group structure on each fibre 
$G_x=\pi^{-1}(x)$, $x\in X$ such that the following maps are continuous:
\begin{align*}
\eps:X\to G&&&\text{identity,}\\
m:G^{(2)}\to G&&&\text{multiplication,}\\
\iota:G\to G&&&\text{inverse},
\end{align*}
where $G^{(2)}$ is the set of all $(x,y)\in G\times G$ with $\pi(x)=\pi(y)$.
\end{definition}

Note that this implies that $\eps$ is a homeomorphism onto the image, so $X$ carries the subspace topology but also $X$ carries the quotient topology induced by the surjective map $\pi$.
In all, the topology on $X$ is determined by the one on $G$.

\begin{definition}
Each fibre $G_x$ being a locally compact group, carries a Haar measure which is unique up to scaling.
A \e{coherent system} of Haar measures is a family $(\mu_x)_{x\in X}$ such that $\mu_x$ is a Haar measure on $G_x$ such that for each $\phi\in C_c(G)$ the map
$$
x\mapsto \int_{G_x}\phi\,d\mu_x
$$
is continuous.
\end{definition}

\begin{proposition}\label{thm2.4}
Let $\pi:G\to X$ be a bundle of groups over a paracompact space $X$.
There exists a coherent system of Haar measures $\mu_x$ on $G$ if and only if the map $\pi$ is open.
\end{proposition}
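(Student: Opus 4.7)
The plan is to treat the two implications separately.

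For ($\Rightarrow$), assume a coherent system $(\mu_x)$ exists and show $\pi$ is open. Given an open set $U\subset G$ and a point $x_0=\pi(g_0)$ with $g_0\in U$, choose $\phi\in C_c(G)$ nonnegative, supported in $U$, with $\phi(g_0)>0$. Since every Haar measure on a locally compact group has full support, $\int\phi\,d\mu_{x_0}>0$, and by coherence the function $x\mapsto\int\phi\,d\mu_x$ is continuous, so it remains positive on some neighborhood $V$ of $x_0$. For any $x\in V$ the fiber $G_x$ must meet $\supp\phi\subset U$, so $V\subset\pi(U)$. Paracompactness is not needed for this direction.

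For ($\Leftarrow$), assume $\pi$ is open. The construction proceeds in two stages: local existence and globalization by paracompactness. For the local step, fix $x_0\in X$ and choose $\phi_0\in C_c(G)$ nonnegative with $\phi_0(\eps(x_0))>0$. Openness of $\pi$ applied to the open set $\{\phi_0>0\}$ yields an open neighborhood $V_0$ of $x_0$ on which $\phi_0|_{G_x}\not\equiv 0$, so any Haar measure on $G_x$ gives positive integral to $\phi_0$. I define $\mu_x$ on $G_x$ for $x\in V_0$ as the unique Haar measure with $\int\phi_0\,d\mu_x=1$. The crucial technical step, and the principal obstacle, is to verify that $x\mapsto\int\psi\,d\mu_x$ is continuous on $V_0$ for every $\psi\in C_c(G)$. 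I plan to prove this using openness of $\pi$ in the equivalent net-lifting form: every net $x_\alpha\to x_0$ in $X$ admits, for each $g_0\in G_{x_0}$, lifts $g_\alpha\in G_{x_\alpha}$ with $g_\alpha\to g_0$. Combined with translation invariance of the $\mu_x$ and a convolution/Fubini manipulation that expresses $\int\psi\,d\mu_x$ in terms of $\phi_0$, a dominated convergence argument on the compact supports of $\psi$ and $\phi_0$ should yield the continuity. This is where the full strength of openness of $\pi$ is used.

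Finally, I globalize using paracompactness of $X$. Cover $X$ by neighborhoods $V_\alpha$ carrying local coherent systems $\mu^\alpha$ as constructed above, and take a locally finite partition of unity $(\chi_\alpha)$ subordinate to $(V_\alpha)$. Setting
$$\mu_x:=\sum_\alpha \chi_\alpha(x)\,\mu_x^\alpha$$
yields a positive linear combination of Haar measures on $G_x$, which by uniqueness of Haar measure up to positive scaling is itself a (positive, nonzero) Haar measure on $G_x$. For every $\psi\in C_c(G)$,
$$\int\psi\,d\mu_x=\sum_\alpha\chi_\alpha(x)\int\psi\,d\mu_x^\alpha$$
is a locally finite sum of continuous functions, hence continuous, giving the desired global coherent Haar system.
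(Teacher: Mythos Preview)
The paper gives no proof of its own here: it simply cites Lemma~1.3 of Renault's paper on ideal structure of groupoid crossed products. Your outline is precisely the standard argument behind that lemma, so in that sense your approach agrees with what the paper defers to.

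Your ($\Rightarrow$) direction is complete and correct. For ($\Leftarrow$), the normalization-by-$\phi_0$ local construction and the partition-of-unity globalization are both sound; the one place that remains a sketch is the local continuity step, which you flag yourself as the principal obstacle. The phrase ``a convolution/Fubini manipulation \dots\ should yield the continuity'' is accurate but not yet a proof. The standard way to cash this out is: for $\psi\in C_c(G)$ and $x$ near $x_0$, write
\[
\int_{G_x}\psi\,d\mu_x
=\frac{\int_{G_x}\int_{G_x}\psi(g)\,\phi_0(g^{-1}h)\,d\mu_x(g)\,d\mu_x(h)}
       {\int_{G_x}\phi_0\,d\mu_x}
=\int_{G_x}\Psi(h)\,d\mu_x(h),
\]
where $\Psi(h)=\int_{G_{\pi(h)}}\psi(g)\phi_0(g^{-1}h)\,d\mu_{\pi(h)}(g)$; one then reduces to showing that integrals of functions of the form $(g,h)\mapsto F(g,g^{-1}h)$ with $F\in C_c(G\times G)$ vary continuously, and this is exactly where the net-lifting characterization of openness of $\pi$ (together with uniform continuity on compacta) does the work. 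If you fill in that computation, your proof is complete and matches the cited source.
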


\begin{proof}
This is Lemma 1.3 in \cite{RenaultIdeal}.
\end{proof}

\begin{definition}
Let $X$ be a set.
By a \e{groupoid} over $X$ we mean a category with object class $X$ (so it is a small category) in which each arrow is an isomorphism.
We write $G$ for the set of arrows and we use the following notation
\begin{align*}
r,s:G\to X &&&\text{range and source maps,}\\
\eps:X\to G&&&\text{identity,}\\
G^{(2)}\subset G\times G&&&\text{set of composable pairs,}\\
m:G^{(2)}\to G&&&\text{composition,}\\
\iota:G\to G&&&\text{inverse}.
\end{align*}
\end{definition}

\begin{definition}
A \e{topological groupoid} is a groupoid $G$ over $X$ together with topologies on $G$ and $X$ such that the structure maps $r,s,\eps,m,\iota$ are continuous.
Here $G\times G$ carries the product topology and $G^{(2)}\subset G\times G$ the subset topology.
Note that if $X$ is Hausdorff, then $G^{(2)}=\{ (\al,\beta)\in G\times G: r(\beta)=s(\al)\}$ is a closed subset of $G\times G$.
\end{definition}

A \e{locally compact groupoid} is a topological groupoid such that $G$ and $X$ are locally compact Hausdorff spaces.

From now on $G$ is assumed to be a locally compact groupoid.
We use the  notation
\begin{align*}
G_x&=\{ g\in G: s(g)=x\},\\
G^y&=\{ g\in G: r(g)=y\},\\
G_x^y&=G_x\cap G^y.
\end{align*}
As $X$ is Hausdorff, all three sets are closed in $G$.

Note that a bundle of groups is a special case of a groupoid $G$ with $G_x^y=\emptyset$ if $x\ne y$.

\begin{definition}
For a groupoid $G$ the \e{stability groupoid} is defined to be the subset
$$
G'=\big\{ g\in G: r(g)=s(g)\big\}.
$$ 
If $G$ is a topological groupoid, then $G'$ is a closed subgroupoid. 
\end{definition}

\begin{definition}
On a groupoid $G$ we install an equivalence relation 
$$
g\sim h\quad\Leftrightarrow\quad r(g)=r(h)\text{ and }s(g)=s(h).
$$
we write $[g]$ for the equivalence class, i.e., $[g]=G_{s(g)}^{r(g)}$.
\end{definition}

Now assume that $(\mu_x^x)_{x\in X}$ is a coherent family of measures on the  bundle of groups  $G'=\{ g\in G:r(g)=s(g)\}$.
We then get invariant measures $\mu_{[g]}$ on the classes $[g]$ by setting
$$
\int_{[g]}\phi(x)\,d\mu_{[g]}(x)=\int_{G_{s(g)}^{s(g)}}\phi(gx)\,d\mu_{s(g)}^{s(g)}(x).
$$
The invariance of the $\mu_x^x$ yields the well-definedness of the $\mu_{[g]}$.
The uniqueness of a Haar measure implies that $\mu_{[g]}$ is, up to scaling, the unique Radon measure on $[g]$ being right-invariant under $G_{s(g)}^{s(g)}$ or left-invariant under $G_{r(g)}^{r(g)}$.

In the sequel, we shall identify a Radon measure with its positive linear functional, so we write $\mu_{[g]}(\phi)$ for the above integral.

\begin{definition}
We shall need the notion of a topological right-action of a topological groupoid $H$ on a topological space $Z$.
This is given by the following data: first there is a continuous surjection $\rho:Z\to X$, where $X$ is the base set of $H$. We define
$$
Z*H=\big\{ (z,h): \rho(z)=r(h)\big\}.
$$
This is a closed subset of $Z\times H$ and we consider it equipped with the corresponding topology.
Next the action is given by a map
\begin{align*}
Z*H&\to Z,\\
(z,h)&\mapsto zh,
\end{align*}
such that $\rho(zh)=s(h)$ and $z\cdot 1=z$ as well as $z(hh')=(zh)h'$ holds for all $(z,h),(z,hh')\in Z*H$.

Note that the action defines an equivalence relation on $Z$ given by $z\sim zh$ for $h\in H$. We naturally equip $Z/H$ with the quotient topology.
\end{definition}

\begin{lemma}\label{lem2.9}
Assume the locally compact groupoid $H$ acts on a locally compact space $Z$ and that $H$ has open range map.
Then the projection $Z\to Z/H$ is open.
\end{lemma}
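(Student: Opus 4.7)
The plan is to use the standard reduction: a quotient map $q:Z\to Z/H$ is open iff the saturation $UH:=\{zh:z\in U,\ (z,h)\in Z*H\}$ of every open set $U\subseteq Z$ is again open in $Z$. Indeed, $q^{-1}(q(U))=UH$, so if $UH$ is open, then by the definition of the quotient topology $q(U)$ is open.

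So the task reduces to showing that $UH$ is open whenever $U\subseteq Z$ is open. The key observation is that $UH$ can be realised as the image of an open subset of $Z*H$ under the first-coordinate projection $\pi_1:Z*H\to Z$, after applying a convenient homeomorphism. Concretely, I would introduce the involution
$$
\phi:Z*H\to Z*H,\qquad \phi(z,h)=(zh,h^{-1}),
$$
which is well-defined since $\rho(zh)=s(h)=r(h^{-1})$, continuous by continuity of the action and of $\iota$, and equal to its own inverse (using $z(hh^{-1})=z$). Thus $\phi$ is a homeomorphism. Setting $U*H:=(U\times H)\cap(Z*H)$, which is open in $Z*H$, we get
$$
\phi(U*H)=\big\{(zh,h^{-1}):z\in U,\ \rho(z)=r(h)\big\},
$$
an open subset of $Z*H$, whose image under $\pi_1$ is exactly $UH$.

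It therefore remains to prove that $\pi_1:Z*H\to Z$ is an open map. This is the step where the hypothesis on $r$ enters. Taking basic opens of the form $W=(V_1\times V_2)\cap (Z*H)$ with $V_1\subseteq Z$ and $V_2\subseteq H$ open, one computes
$$
\pi_1(W)=\big\{z\in V_1:\exists\, h\in V_2,\ \rho(z)=r(h)\big\}=V_1\cap \rho^{-1}\bigl(r(V_2)\bigr).
$$
Since $r$ is open, $r(V_2)$ is open in $X$; since $\rho$ is continuous, $\rho^{-1}(r(V_2))$ is open in $Z$; and the intersection with $V_1$ remains open. Hence $\pi_1$ is open, and combining with the preceding paragraph yields openness of $UH$, completing the proof.

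I do not expect a serious obstacle; the only subtlety worth checking is that $\phi$ is genuinely a homeomorphism (which reduces to continuity of $m$ and $\iota$ and to verifying $\phi\circ\phi=\Id$), and that the description of $\pi_1(W)$ on basic opens really gives every point of the image, which is immediate from the definition of $Z*H$.
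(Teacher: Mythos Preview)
Your argument is correct. The paper proceeds differently: it shows that the saturation $VH$ is open by a net-lifting argument, namely if $z_i\to vh$ with $v\in V$, then using openness of the source map one can lift $\rho(z_i)\to s(h)$ to a net $h_i\to h$ with $s(h_i)=\rho(z_i)$, whence $z_ih_i^{-1}\to v$ lies eventually in $V$ and so $z_i\in VH$. Your route is more structural: you factor the saturation as $UH=\pi_1\bigl(\phi(U*H)\bigr)$ via the involution $\phi(z,h)=(zh,h^{-1})$, and isolate the use of the hypothesis in the single computation $\pi_1\bigl((V_1\times V_2)\cap Z*H\bigr)=V_1\cap\rho^{-1}(r(V_2))$. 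This avoids nets altogether and makes the role of openness of $r$ completely transparent; the paper's approach, on the other hand, is closer in spirit to the original argument of Muhly--Williams and generalises readily to situations where one prefers to reason pointwise with convergent nets.
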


\begin{proof}
This is Lemma 2.1 in \cite{MW}. However, in that paper the assertion was given under a stronger definition of $H$-actions then the one we use, as it was assumed that the map $\rho:Z\to X$ also be open.
Lemma 2.1 and its proof in \cite{MW}, however, are valid under our weaker assumptions.
For the convenience of the reader we shall show this by reproducing the proof here:
Let $V\subset Z$ be open, in order to show that its image in $Z/H$ is open, it suffices to show that the union of orbits
$
VH=\big\{ vh:v\in V, (v,h)\in Z*H\big\}
$
is open in $Z$.
So it suffices to show that any net $z_i\to vh$ with $v\in V$ and $h\in H$ eventually is in $VH$.
But $\rho(z_i)$ converges to $\rho(vh)=s(h)$.
As the range map of $H$ is open, so is the source map $s$,  hence the set $s(H)$ is open and we can find a net $h_i$ in $H$ on the same index set, such that $\rho(z_i)=s(h_i)$ for all $i\ge i_0$ for some index $i_0$.
Further, the same applies to open neighborhoods of $h$, so we can choose the net so that $h_i\to h$.
Then $z_ih_i^{-1}$ converges to $v$ and thus  is eventually  in $V$ and $z_i=z_ih_i^{-1}h_i$ is eventually in $VH$.
\end{proof}

\begin{definition}
An action of a groupoid $H$ on a space $Z$  is called \e{free} if $zh=z$ implies that $h=1_{s(g)}$ and it is called \e{proper}, if the map $Z*H\to Z\times Z$, $(z,h)\mapsto (zh,z)$ is proper.

For any groupoid $G$ the action of $G'$ on $G$ is easily seen to be free and proper.
\end{definition}

\begin{lemma}\label{prop3.4}
Let $G$ be a locally compact groupoid over a paracompact space $X$ and let $(\mu_x^x)_{x\in X}$ be a coherent system of Haar measures on the groups $G_x^x$, $x\in X$.
Then for every $\phi\in C_c(G)$ the function
$$
\ol\phi:g\mapsto \mu_{[g]}(\phi)
$$
is continuous.
\end{lemma}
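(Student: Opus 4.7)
The strategy is to reduce the claim to the coherence property of $(\mu_x^x)_{x\in X}$ via a Tietze-extension trick. Fix $g_0\in G$ and choose a compact neighborhood $W$ of $g_0$; it suffices to prove continuity of $\ol\phi$ at $g_0$. Writing $K:=\supp(\phi)$, the elementary observation I would start from is that whenever $g\in W$ and $h\in G_{s(g)}^{s(g)}$ satisfy $\phi(gh)\ne 0$, then $h=g^{-1}(gh)$ lies in the set
$$
L\ :=\ m\bigl((\iota(W)\times K)\cap G^{(2)}\bigr)\cap G',
$$
which is compact because $\iota(W)\times K$ is compact, $m$ is continuous, and $G'$ is closed. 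Hence, for every $g\in W$, the integrand $h\mapsto\phi(gh)$ is supported inside the fixed compact set $L\subset G'$, independent of $g$.

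The key technical step is to replace the family $\{h\mapsto\phi(gh)\}_{g\in W}$, whose members live on the varying fibres $G_{s(g)}^{s(g)}$, by a single continuous function on a product space. I would pick a compact neighborhood $K'\subset G'$ of $L$ and a cutoff $\chi\in C_c(G')$ with $\chi\equiv 1$ on $L$ and $\supp(\chi)\subset K'$. The subset
$$
A\ :=\ \{(g,h)\in W\times K': s(g)=\pi(h)\}
$$
is closed in the compact Hausdorff (hence normal) space $W\times K'$, and on $A$ the map $(g,h)\mapsto\phi(gh)$ is continuous. Tietze's theorem then yields a continuous extension $F:W\times K'\to\R$. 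Setting $\Phi(g,h):=F(g,h)\chi(h)$ and extending by $0$ off $W\times K'$ produces a continuous function on $W\times G'$ whose support in the second variable sits inside $K'$ uniformly in $g\in W$. A short case split (using $\phi(gh)=0$ for $h\in G_{s(g)}^{s(g)}\sm L$) gives the identity
$$
\ol\phi(g)\ =\ \int_{G'}\Phi(g,h)\,d\mu_{s(g)}^{s(g)}(h)\qquad(g\in W).
$$

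Continuity of the right-hand side at $g_0$ then follows from a standard triangle-inequality estimate: for any net $g_i\to g_0$ in $W$,
\begin{align*}
\bigl|\ol\phi(g_i)-\ol\phi(g_0)\bigr|\ \le\ &\norm{\Phi(g_i,\cdot)-\Phi(g_0,\cdot)}_\infty\cdot\mu_{s(g_i)}^{s(g_i)}(K')\\
&+\ \Bigl|\int_{G'}\Phi(g_0,\cdot)\,d\mu_{s(g_i)}^{s(g_i)}-\int_{G'}\Phi(g_0,\cdot)\,d\mu_{s(g_0)}^{s(g_0)}\Bigr|.
\end{align*}
The sup-norm tends to zero by uniform continuity of $\Phi$ on the compact set $W\times K'$; the masses $\mu_{s(g_i)}^{s(g_i)}(K')$ are uniformly bounded, because choosing any $\eta\in C_c(G')$ with $\eta\ge\1_{K'}$ makes $x\mapsto\int\eta\,d\mu_x^x$ continuous by coherence and hence locally bounded near $s(g_0)$; and the second term tends to zero by applying coherence directly to $\Phi(g_0,\cdot)\in C_c(G')$.

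The conceptual obstacle, and what the Tietze step is there to resolve, is that the natural integrands $h\mapsto\phi(gh)$ live on the different fibres $G_{s(g)}^{s(g)}$, whereas the coherence of $(\mu_x^x)$ only directly controls a single continuous test function on $G'$. Once this mismatch is bridged by promoting the fibrewise family to an honest element of $C(W\times G')$ with uniform compact support in the second variable, the rest is a routine continuity-of-integral argument.
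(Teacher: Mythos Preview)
Your argument is correct. The support localization via $L=m\bigl((\iota(W)\times K)\cap G^{(2)}\bigr)\cap G'$ is the right observation, the Tietze extension step is sound because $W\times K'$ is compact Hausdorff and $A$ is closed in it, and the final triangle-inequality estimate goes through exactly as you describe (the map $g\mapsto\Phi(g,\cdot)$ into $C(K')$ is continuous by compactness of $K'$, and coherence handles the measure side).

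The paper, however, does not argue this way at all: it disposes of the lemma in one line by invoking Lemma~2.9 of Muhly--Renault--Williams \cite{MRW}, noting that the $G'$-action on $G$ is free and proper. That lemma is a general statement that averaging a $C_c$-function over the orbits of a free proper groupoid action with Haar system yields a continuous function on the quotient. So the paper outsources precisely the analytic content you work out by hand. Your route is longer but fully self-contained and elementary---it never leaves the realm of basic topology and Radon measures, and in particular it does not appeal to the groupoid-equivalence machinery of \cite{MRW}. It also makes transparent that paracompactness of $X$ plays no role once the coherent system $(\mu_x^x)$ is assumed to exist; the citation route inherits whatever standing hypotheses \cite{MRW} carries.
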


\begin{proof}
Since the $G'$ action is free and proper, this is immediate from Lemma 2.9 of \cite{MRW}.
\end{proof}

\section{Haar systems}

\begin{definition}
A \e{Haar system} on the locally compact groupoid $G$ is a family $(\mu^x)_{x\in X}$ of Radon measures on $G$ with
\begin{enumerate}[\rm (a)]
\item $\supp(\mu^x)=G^x$,
\item $\ds\int_G\phi(\al g)\,d\mu^{y}(g)=
\int_G\phi(g)\,d\mu^{x}$ for every $\phi\in C_c(G)$ and every $\al\in G_y^x$,
\item $\ds x\mapsto \int_G\phi(g)\,d\mu^x(g)$ is continuous on $X$ for every $\phi\in C_c(G)$.
\end{enumerate}
\end{definition}

If a locally compact groupoid $G$ admits a Haar system, then the range map, and so the source map, too, is open, see Corollary to Lemma 2 in \cite{Seda}, see also \cite{Williams}.

The question for the converse assertion, asked in \cite{Williams}, is answered in the negative by the following proposition.

\begin{proposition}\label{Prop4.1}
There exists a locally compact, even compact, groupoid $G$, whose range map is open, but no  Haar system exists on $G$.
\end{proposition}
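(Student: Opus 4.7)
The plan is to exhibit a specific compact Hausdorff groupoid $G$ over a compact Hausdorff base $X$ whose range map is open, yet for which no family $(\mu^x)_{x\in X}$ of Radon measures can simultaneously satisfy the left-invariance, the continuity requirement, and the support condition $\supp(\mu^x)=G^x$. The obstruction is of a new kind---\emph{failing support}---since openness of $r$ is only a necessary condition, not sufficient.

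First I would choose $X$ to be a compact Hausdorff space containing at least one non-isolated point $x_0$. A first natural attempt is $X=\{0\}\cup\{1/n:n\geq 1\}$, but since the paper conjectures that second countability should rescue existence, the counterexample is expected to be non-metrisable; a suitable candidate is the one-point compactification of an uncountable discrete set. Over $X$ I would then attach extra arrows (either non-trivial isotropy, or non-trivial orbit relations) at the non-$x_0$ points, and install a topology on $G$ so that $G$ is compact and Hausdorff, all structure maps $r,s,\eps,m,\iota$ are continuous, the range map is open, and the extra arrows accumulate to the identity $1_{x_0}\in G^{x_0}$ in a prescribed asymmetric fashion. Openness of $r$ is the delicate point: combined with Hausdorffness it forces every arrow over $x_0$ to be a net-limit of arrows from neighbouring fibres, which in turn strongly constrains both the possible isotropy at $x_0$ and the possible topologies on $G$.

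Assuming a Haar system $(\mu^x)$ exists, the invariance axiom (applied to the stability subgroupoid $G'$ via Proposition \ref{thm2.4}, and to the orbit relations via the pushforward formula for $\mu_{[g]}$) pins down each $\mu^x$ up to finitely many positive scalars; the support condition forces all these scalars to be strictly positive, and the value $c_0=\mu^{x_0}(G^{x_0})$ to be positive as well. I would then select a continuous test function $\phi\in C(G)$ whose values along the accumulating extras in $G^{1/n}$ single out a specific component of $\mu^{x_0}$ in the limit. The continuity condition applied to $\phi$ becomes a numerical identity linking these scalars to a quantity that, by invariance and the prescribed accumulation pattern, must equal zero---contradicting either the positivity of one of the scalars or the positivity of $c_0$, and hence the support condition.

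The hard part will be the construction of $G$ itself. Every obvious candidate (a bundle of groups, a finite pair-equivalence relation, or combinations thereof) turns out to admit a Haar system, either directly by Proposition \ref{thm2.4} or by an explicit rescaling of the invariant measures on each fibre. One must therefore exhibit a genuinely exotic configuration, which appears to require abandoning second countability and crafting the topology of $G$ so that the accumulation of extras to $1_{x_0}$ is rich enough to force openness of $r$ yet asymmetric enough to thwart any global scalar rescaling of the invariance. Once such a $G$ has been exhibited, verifying compactness, Hausdorffness, continuity of the structure maps and openness of $r$ is a lengthy but routine check, and the measure-theoretic contradiction then reduces to the test-function computation sketched above.
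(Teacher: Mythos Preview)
Your proposal never actually constructs the groupoid $G$; it only outlines a strategy and explicitly flags the construction as ``the hard part'' left undone. More importantly, the route you sketch---engineering an asymmetric accumulation of extra arrows so that invariance plus continuity force some scalar to vanish---is both unnecessary and fragile: such locally hand-built configurations tend to remain second countable, precisely the regime in which the paper conjectures Haar systems always exist, so one should not expect a contradiction of that type to materialize.

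The paper's argument is far simpler and sidesteps all of this. You correctly single out the one-point compactification $X$ of an uncountable discrete set as a relevant base, but you then discard the pair groupoid, which is exactly what works. The key observation is purely measure-theoretic on $X$: any Radon measure on $X$ is finite (since $X$ is compact), hence gives positive mass to at most countably many isolated points, so its support is a proper subset of $X$; thus $X$ cannot be the support of any Radon measure. Now take $G=X\times X$ with the product topology and the pair-groupoid structure $(x,y)(y,z)=(x,z)$. The range map is the first coordinate projection, hence open; $G$ is compact Hausdorff; and for each $x$ the source map restricts to a homeomorphism $G^x\to X$. Therefore $G^x$ cannot be the support of any Radon measure, and the axiom $\supp(\mu^x)=G^x$ is unsatisfiable outright---no interplay with invariance or continuity, and no test-function computation, is needed.
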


\begin{proof}
There are locally compact, even compact, Hausdorff spaces which cannot be the support of any Radon measure.
Here are two examples:
\begin{itemize}
\item Let $X$ be the unit ball of a Hilbert space of uncountable dimension and equip $X$ with the weak topology.
By the Banach-Alaoglu-Theorem, $X$ is a compact Hausdorff space.
By Corollary 7.14.59 of volume 2 of \cite{Boga}, the set $X$ cannot be the support of any Radon measure
\item (Williams) Let $Y$ be an uncountable set with the discrete topology and let $X=Y\cup\{\infty\}$ be its one-point compactification.
Then $X$ cannot be the support of any Radon measure. To see this, let $m$ be a Radon measure on $X$, then $m(X)<\infty$, as $X$ is compact. Further, $m(Y)=\sum_{y\in Y}m(\{y\})$, as $m$ is regular and the only compact subsets of $Y$ are the finite sets.
As $m(Y)<\infty$, the set $M$ of all $y\in Y$ with  $m(\{y\})>0$ is countable, therefore $M\ne Y$ and $m$ is supported in $M\cup\{\infty\}$.
\end{itemize}

Let now $X$ be any  locally compact Hausdorff space which is not the support of a Radon measure.
Let $G=X\times X$ with the product topology and make $g$ a groupoid by setting $(x,y)(y,z)=(x,z)$ and $r(x,y)=x$ as well as $s(x,y)=y$.
Then the source map is a homeomorphism between $G^x$ and $X$, so $G^x$ cannot be the support of any Radon measure, hence no Haar system exists.
\end{proof}

\begin{conjecture}
Every second countable, locally compact groupoid with open range map admits a Haar system. 
\end{conjecture}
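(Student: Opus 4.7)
The strategy is to reduce to the construction announced in the abstract: a Haar system on $G$ can be assembled from a coherent system of Haar measures on the stability bundle $G'$ together with a Haar system on the quotient principal groupoid $G/G'$. Under the standing hypotheses, $X$ is locally compact and second countable, hence metrizable and paracompact. A preliminary step is to verify that the projection $G'\to X$ is open; granted this, Proposition \ref{thm2.4} supplies a coherent system of Haar measures $(\mu_x^x)$ on $G'$ for free, disposing of one of the two ingredients.

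All of the difficulty is therefore concentrated in producing a Haar system on the principal quotient $Q=G/G'$. This is a locally compact, second countable, principal groupoid (a topological equivalence relation) on $X$. Its range map inherits openness from $G$ via Lemma \ref{lem2.9} applied to the free and proper $G'$-action on $G$. A Haar system on such a principal groupoid amounts to a Radon measure on each orbit, of full support, depending continuously on the base point; the left-invariance condition collapses to the requirement that the measure attached to a point of an orbit depends only on the orbit, because any arrow between two fixed endpoints in $Q$ is unique.

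To build such a family I would attempt a Bruhat-function style argument. Using second countability, fix a countable locally finite partition of unity $(\psi_n)$ on $X$ subordinate to relatively compact open sets. For each $x\in X$ and each $\phi\in C_c(Q)$ one would tentatively set $\mu^x(\phi)=\sum_n \psi_n(x)\,\nu_n^x(\phi)$, where $\nu_n^x$ is a functional obtained by transporting $\psi_n$ along the orbit of $x$ using the source map of $Q$. Openness of the range map of $Q$ should guarantee that each $\nu_n^x$ is finite and varies continuously in $x$, while orbit-invariance would have to be enforced by a normalization depending on the orbit, chosen measurably and glued back to continuity via a careful use of the partition of unity.

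The main obstacle, and the reason the statement remains a conjecture rather than a theorem, is precisely this principal case. When orbits may be dense, non-locally-closed, and of wildly varying size, there is no continuous cross-section to the orbit projection in general, and local data must be glued using only openness of the range map together with second countability. Establishing finiteness on compact sets, continuity in the base point, and orbit-invariance all at once is the classical hard problem of producing a transverse measure for a topological equivalence relation, and it is here that a genuinely new idea appears to be required.
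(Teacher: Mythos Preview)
The statement is a \emph{conjecture}; the paper offers no proof, and your write-up is honest about this, correctly isolating the principal case as the unresolved core. Your reduction to that case via the stability bundle is precisely the content of Theorem~\ref{thm5.5}.

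There is, however, a genuine gap already in the reduction. You treat openness of $G'\to X$ as a ``preliminary step'' to be verified, but it is not a consequence of the hypotheses: openness of the range map of $G$ does not force openness of its restriction to $G'$. Theorem~\ref{thm5.5} takes openness of $r|_{G'}$ as an \emph{independent hypothesis}, and without it Proposition~\ref{thm2.4} is unavailable, so there is no coherent system $(\mu_x^x)$ and the averaging map $\phi\mapsto\ol\phi$ of Lemma~\ref{prop3.4} never gets off the ground. A concrete obstruction: take the transformation groupoid of the circle group acting on $\R^2$ by rotations about the origin. This $G$ is second countable with open range map and visibly carries a Haar system (use Haar measure on the circle), yet the isotropy jumps from trivial to the full circle at the origin, so $G'\to\R^2$ is not open. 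The conjecture holds for this $G$, but your reduction cannot reach it.

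Your Bruhat-function sketch for the principal quotient is a reasonable heuristic, and you are right that orbit-invariance of the normalization is where it breaks. But the example above shows that even a complete solution of the principal case would not, along the route you propose, settle the conjecture.
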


\begin{definition}
Let $G$ be a groupoid over $X$. 
We write $E(G)\subset X\times X$ for the image of the map $g\mapsto(s(g),r(g))$.
Then $E(G)$ is an equivalence relation on $X$.

We say that a groupoid $G$ is a \e{principal groupoid} if $G_x^x=\{1_x\}$ for every $x\in X$.
This means that the groupoid is completely described by its equivalence relation.
Note, though, that for topological groupoids the topology on $G$ generally differs from the one on $E(G)$ as a subset of $X\times X$.
\end{definition}

\begin{lemma}
Let $G$ be a groupoid over a set $X$.
Define an equivalence relation on $G$ by
$$
g\sim h\quad\Leftrightarrow\quad r(g)=r(h)\text{ and }s(g)=s(h).
$$
Then the set $\ol G=G/\sim$ becomes a groupoid, indeed a principal groupoid, by setting $[g][h]=[gh]$ whenever $g$ and $h$ are composable.
\end{lemma}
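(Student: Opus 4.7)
The plan is essentially a well-definedness check followed by routine verification of the groupoid axioms and the principal property. There is no real obstacle; the only thing to be careful about is that the equivalence class operations are independent of the chosen representatives.

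First I would record that the relation $g\sim h$ respects $r$ and $s$ by its very definition, so the maps $\bar r,\bar s:\ol G\to X$ given by $\bar r([g])=r(g)$ and $\bar s([g])=s(g)$ are well defined, and $\bar\eps(x)=[1_x]$ gives a well-defined section. In particular, whether two classes $[g],[h]$ are composable depends only on the classes, since composability amounts to $\bar s([g])=\bar r([h])$.

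Next I would verify that the product $[g][h]:=[gh]$ is well defined when $g,h$ are composable. If $g\sim g'$ and $h\sim h'$, then $s(g')=s(g)=r(h)=r(h')$, so $g'h'$ is defined in $G$; moreover $r(g'h')=r(g')=r(g)=r(gh)$ and $s(g'h')=s(h')=s(h)=s(gh)$, whence $g'h'\sim gh$. Associativity, the identity laws $[1_{r(g)}][g]=[g]=[g][1_{s(g)}]$, and the existence of inverses via $[g]^{-1}:=[g^{-1}]$ then all follow by descending the corresponding identities from $G$, using only the relation and the fact that $r,s,\eps,m,\iota$ intertwine with $\sim$. (Well-definedness of the inverse uses that $g\sim h$ implies $g^{-1}\sim h^{-1}$, which is immediate from $r(g^{-1})=s(g)$ and $s(g^{-1})=r(g)$.)

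Finally, I would verify that $\ol G$ is principal. Given $[g]\in\ol G$ with $\bar r([g])=\bar s([g])=x$, we have $r(g)=s(g)=x=r(1_x)=s(1_x)$, so by definition $g\sim 1_x$ and thus $[g]=[1_x]=\bar\eps(x)$. Hence $\ol G_x^x=\{[1_x]\}$ for every $x\in X$, which is exactly the principal condition. None of these steps requires more than a line, and no topological hypothesis enters, which matches the purely set-theoretic formulation of the lemma.
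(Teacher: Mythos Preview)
Your proposal is correct and follows exactly the routine verification the paper has in mind; the paper itself simply writes ``This is easily checked.'' Your argument supplies the details the author omits, and nothing more is needed.
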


\begin{proof}
This is easily checked.
\end{proof}

\begin{theorem}\label{thm5.5}
Let $G$ be a locally compact groupoid over a paracompact space $X$.
Suppose that the stability groupoid $G'$ has open range map.
\begin{enumerate}[\rm (a)]
\item The groupoid $\ol G$, when equipped with the quotient topology, is a locally compact groupoid. The quotient map $G\to \ol G$ is open.
\item If the range map of $G$ is open, then so is the range map of $\ol G$.
\item If $\ol G$ admits a Haar system, then $G$ admits a Haar system.
\end{enumerate}
\end{theorem}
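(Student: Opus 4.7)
The plan is to handle parts (a) and (b) as essentially formal consequences of the previously developed machinery, then to do the real work in (c) by integrating a given Haar system on $\ol G$ against the coherent Haar measures on the orbits.

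For (a), I would identify $\ol G$ with the orbit space $G/G'$ of the right action of the stability groupoid on $G$, noting that the orbit of $g$ is exactly $[g]=G_{s(g)}^{r(g)}$. Since $G'$ has open range map by hypothesis, Lemma~\ref{lem2.9} applied to this action gives that the quotient map $q:G\to\ol G$ is open. The action of $G'$ is free and proper, so the orbit equivalence relation is closed in $G\times G$ and $\ol G$ is Hausdorff; combined with openness of $q$ from a locally compact space, $\ol G$ is locally compact. Continuity of range, source, inverse, and composition on $\ol G$ follows from the universal property of the quotient together with continuity of the corresponding structure maps on $G$. For (b), the factorization $\ol r\circ q=r$ does the job: for $V$ open in $\ol G$, the preimage $q^{-1}(V)$ is open in $G$, and surjectivity of $q$ gives $\ol r(V)=r(q^{-1}(V))$, which is open since $r$ is open.

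The bulk of the proof is (c). By Proposition~\ref{thm2.4}, the open range hypothesis on $G'$ yields a coherent system $(\mu_x^x)_{x\in X}$ of Haar measures on $G'$; these extend to measures $\mu_{[g]}$ on the orbits $[g]$ as in Section~\ref{SecHaar}. Given a Haar system $(\nu^x)_{x\in X}$ on $\ol G$, set
$$
\mu^x(\phi):=\int_{\ol G}\ol\phi([g])\,d\nu^x([g]),\qquad \phi\in C_c(G),
$$
where $\ol\phi([g])=\mu_{[g]}(\phi)$. By Lemma~\ref{prop3.4} the function $\ol\phi$ is continuous on $\ol G$, and since $\{\ol\phi\ne 0\}\subset q(\supp\phi)$ it is compactly supported, so the integral makes sense and $\mu^x$ is a Radon measure on $G$. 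Continuity of $x\mapsto\mu^x(\phi)$ then follows immediately from the Haar system property of $(\nu^x)$. For the support axiom, the containment $\supp\mu^x\subset q^{-1}(\supp\nu^x)=q^{-1}(\ol G^x)=G^x$ is clear; for the reverse, given $g\in G^x$ and an open neighborhood $V\subset G$ of $g$, choose $\phi\in C_c(G)$ nonnegative with $\supp\phi\subset V$ and $\phi(g)>0$; then $\ol\phi([g])>0$ (the set $\{\phi>0\}\cap[g]$ is a nonempty open subset of $[g]$, which carries positive $\mu_{[g]}$-mass), and since $[g]\in\supp\nu^x$, we conclude $\mu^x(\phi)>0$.

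The invariance property is the technical heart. Given $\al\in G_y^x$, unwinding the definition of $\mu_{[g]}$ shows that left multiplication $L_\al:[g]\to[\al g]$ pushes $\mu_{[g]}$ to $\mu_{[\al g]}$, because both sides are the pushforward of $\mu_{s(g)}^{s(g)}$ under $h\mapsto \al g h$. Hence $\ol{\phi\circ L_\al}([g])=\mu_{[\al g]}(\phi)=\ol\phi(\ol\al\cdot[g])$, where $\ol\al\in\ol G_y^x$ denotes the class of $\al$. Integrating against $\nu^y$ and invoking the left-invariance axiom of the Haar system on $\ol G$ yields
$$
\int_G\phi(\al g)\,d\mu^y(g)=\int_{\ol G}\ol\phi(\ol\al\cdot [g])\,d\nu^y([g])=\int_{\ol G}\ol\phi([h])\,d\nu^x([h])=\int_G\phi(g)\,d\mu^x(g),
$$
as required. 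I expect this invariance step, together with verifying that $\supp\mu^x$ exhausts \emph{all} of $G^x$ rather than a proper subset, to be the only genuine obstacles; both reduce to clean bookkeeping once the pushforward identity $(L_\al)_*\mu_{[g]}=\mu_{[\al g]}$ is noted.
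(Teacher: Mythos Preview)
Your proposal is correct and follows the same overall strategy as the paper: openness of $q$ via Lemma~\ref{lem2.9}, part (b) via the factorization $\ol r\circ q=r$, and part (c) via the formula $\mu^x(\phi)=\int_{\ol G}\ol\phi\,d\nu^x$. In (c) you supply the verifications of support, continuity, and invariance that the paper simply asserts; your pushforward identity $(L_\al)_*\mu_{[g]}=\mu_{[\al g]}$ is exactly the right bookkeeping device.

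The one genuine difference is in (a). The paper establishes that $\ol G$ is Hausdorff and locally compact \emph{measure-theoretically}: it invokes Proposition~\ref{thm2.4} already here to obtain the coherent system $(\mu_x^x)$, uses Lemma~\ref{prop3.4} to produce continuous compactly supported functions $\ol\phi$ on $\ol G$, and then separates points and builds compact neighborhoods with these functions. You instead argue \emph{topologically}, using that the $G'$-action is free and proper to get a closed orbit relation (hence Hausdorff quotient) and then that an open quotient of a locally compact space is locally compact. Your route is cleaner and avoids invoking the Haar measures until they are actually needed in (c); the paper's route has the virtue of setting up the $\ol\phi$ machinery early so that (c) becomes a one-line definition. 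Both are valid.
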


\begin{proof}
(a) 
By Proposition \ref{thm2.4}, the groupoid $G'$ admits a coherent system of Haar measures $(\mu_x^x)_{x\in X}$.
Let $g_0\in G$ and let $\phi\in C_c^+(G)$ such that $\phi(g_0)>0$.
Let
$$
\ol\phi:g\mapsto\int_{G_{s(g)}^{r(g)}}\phi(gh)\,d\mu_{s(g)}^{r(g)}(h).
$$
By Lemma \ref{prop3.4} the map $\ol\phi$ is continuous.
It factors over $\ol G$, hence defines a continuous map of compact support on $\ol G$.
The set $U=\{ x\in\ol G: \ol\phi(x)>0\}$ is an open neighborhood of $[g_0]$, so $\supp(\ol\phi)$ is a compact neighborhood of $[g_0]$.
Therefore $\ol G$ is locally compact.

If $[g]\ne [h]$, then we can find $\phi,\psi\in C_c^+(G)$ such that $\ol\phi$ and $\ol\psi$ have disjoint supports and $\phi(g),\psi(h)>0$.
Considering the continuous function $\ol\phi-\ol\psi$ on $\ol G$, one sees that $[h]$ and $[g]$ have disjoint neighborhoods, so $\ol G$ is a Hausdorff space.
Together we infer that $\ol G$ is a locally compact groupoid.

The quotient map $p:G\to\ol G$ is open by Lemma \ref{lem2.9}.

(b) As the range map of $G$ is open and factors over the range map of $\ol G$, the range map of $\ol G$ is open as well.

(c) If $(m^x)$ is a Haar system for $\ol G$, then
$$
\phi\mapsto \int_{\ol G}\ol\phi(g)\,dm^x(g)
$$
defines a Haar system on $G$.
\end{proof}

\begin{bibdiv} \begin{biblist}

%\bib{Blanchard}{article}{
%   author={Blanchard, {\'E}tienne},
%   title={D\'eformations de $C^*$-alg\`ebres de Hopf},
%   language={French, with English and French summaries},
%   journal={Bull. Soc. Math. France},
%   volume={124},
%   date={1996},
%   number={1},
%   pages={141--215},
%   issn={0037-9484},
%   %review={\MR{1395009 (97f:46092)}},
%}

\bib{Boga}{book}{
   author={Bogachev, V. I.},
   title={Measure theory. Vol. I, II},
   publisher={Springer-Verlag, Berlin},
   date={2007},
   pages={Vol. I: xviii+500 pp., Vol. II: xiv+575},
   isbn={978-3-540-34513-8},
   isbn={3-540-34513-2},
   review={\MR{2267655}},
   doi={10.1007/978-3-540-34514-5},
}

\bib{HA2}{book}{
   author={Deitmar, Anton},
   author={Echterhoff, Siegfried},
   title={Principles of harmonic analysis},
   series={Universitext},
   publisher={Springer},
   place={New York},
   date={2009},
   pages={xvi+333},
   isbn={978-0-387-85468-7},
   %review={\MR{2457798 (2010g:43001)}},
}

\bib{Morita}{article}{
   author={Morita, K.},
   title={Paracompactness and product spaces},
   journal={Fund. Math.},
   volume={50},
   date={1961/1962},
   pages={223--236},
   issn={0016-2736},
   %review={\MR{0132525 (24 \#A2365)}},
}

\bib{Paterson}{book}{
   author={Paterson, Alan L. T.},
   title={Groupoids, inverse semigroups, and their operator algebras},
   series={Progress in Mathematics},
   volume={170},
   publisher={Birkh\"auser Boston, Inc., Boston, MA},
   date={1999},
   pages={xvi+274},
   isbn={0-8176-4051-7},
   %review={\MR{1724106}},
   doi={10.1007/978-1-4612-1774-9},
}

\bib{Renault}{book}{
   author={Renault, Jean},
   title={A groupoid approach to $C^{\ast} $-algebras},
   series={Lecture Notes in Mathematics},
   volume={793},
   publisher={Springer, Berlin},
   date={1980},
   pages={ii+160},
   isbn={3-540-09977-8},
   %review={\MR{584266}},
}

\bib{RenaultIdeal}{article}{
   author={Renault, Jean},
   title={The ideal structure of groupoid crossed product $C^\ast$-algebras},
   note={With an appendix by Georges Skandalis},
   journal={J. Operator Theory},
   volume={25},
   date={1991},
   number={1},
   pages={3--36},
   issn={0379-4024},
   %review={\MR{1191252}},
}

\bib{RenaultCK}{article}{
   author={Kumjian, Alex},
   author={Pask, David},
   author={Raeburn, Iain},
   author={Renault, Jean},
   title={Graphs, groupoids, and Cuntz-Krieger algebras},
   journal={J. Funct. Anal.},
   volume={144},
   date={1997},
   number={2},
   pages={505--541},
   issn={0022-1236},
   %review={\MR{1432596}},
   doi={10.1006/jfan.1996.3001},
}

\bib{MRW}{article}{
   author={Muhly, Paul S.},
   author={Renault, Jean N.},
   author={Williams, Dana P.},
   title={Equivalence and isomorphism for groupoid $C^\ast$-algebras},
   journal={J. Operator Theory},
   volume={17},
   date={1987},
   number={1},
   pages={3--22},
   issn={0379-4024},
   %review={\MR{873460}},
}

\bib{MW}{article}{
   author={Muhly, Paul S.},
   author={Williams, Dana P.},
   title={Groupoid cohomology and the Dixmier-Douady class},
   journal={Proc. London Math. Soc. (3)},
   volume={71},
   date={1995},
   number={1},
   pages={109--134},
   issn={0024-6115},
   review={\MR{1327935}},
   doi={10.1112/plms/s3-71.1.109},
}

\bib{Ramsay}{article}{
   author={Ramsay, Arlan},
   title={The Mackey-Glimm dichotomy for foliations and other Polish
   groupoids},
   journal={J. Funct. Anal.},
   volume={94},
   date={1990},
   number={2},
   pages={358--374},
   issn={0022-1236},
   %review={\MR{1081649}},
   doi={10.1016/0022-1236(90)90018-G},
}

\bib{Rudin}{article}{
   author={Rudin, Mary Ellen},
   title={A new proof that metric spaces are paracompact},
   journal={Proc. Amer. Math. Soc.},
   volume={20},
   date={1969},
   pages={603},
   issn={0002-9939},
   %review={\MR{0236876 (38 \#5170)}},
}

\bib{Seda1}{article}{
   author={Seda, A. K.},
   title={Haar measures for groupoids},
   journal={Proc. Roy. Irish Acad. Sect. A},
   volume={76},
   date={1976},
   number={5},
   pages={25--36},
   %review={\MR{0427598}},
}

\bib{Seda}{article}{
   author={Seda, Anthony Karel},
   title={On the continuity of Haar measure on topological groupoids},
   journal={Proc. Amer. Math. Soc.},
   volume={96},
   date={1986},
   number={1},
   pages={115--120},
   issn={0002-9939},
   %review={\MR{813822}},
   doi={10.2307/2045664},
}

\bib{Vaughan}{article}{
   author={Vaughan, H. E.},
   title={On locally compact metrisable spaces},
   journal={Bull. Amer. Math. Soc.},
   volume={43},
   date={1937},
   number={8},
   pages={532--535},
   issn={0002-9904},
   %review={\MR{1563581}},
   doi={10.1090/S0002-9904-1937-06593-1},
}

\bib{Williams}{article}{
   author={Williams, Dana P.},
   title={Haar systems on equivalent groupoids},
   journal={Proc. Amer. Math. Soc. Ser. B},
   volume={3},
   date={2016},
   pages={1--8},
   issn={2330-1511},
   review={\MR{3478528}},
}

\end{biblist} \end{bibdiv}

{\small Mathematisches Institut,
Auf der Morgenstelle 10,
72076 T\"ubingen,
Germany\\
\tt deitmar@uni-tuebingen.de}

\today

\end{document}